\numberwithin{figure}{section}
\renewenvironment{abstract}
{\small\vspace{-1em}
\begin{center}
\bfseries\abstractname\vspace{-.5em}\vspace{0pt}
\end{center}
\list{}{
\setlength{\leftmargin}{0.6in}%
\setlength{\rightmargin}{\leftmargin}}%
\item\relax}
{\endlist}
\declaretheorem[name=Theorem]{theorem}
\declaretheorem[name=Lemma, sibling=theorem]{lemma}
\declaretheorem[name=Corollary, sibling=theorem]{corollary}
\declaretheorem[name=Conjecture, sibling=theorem]{conjecture}
\declaretheorem[name=Claim, sibling=theorem]{claim}
\def\cqedsymbol{\ifmmode$\lrcorner$\else{\unskip\nobreak\hfil
\penalty50\hskip1em\null\nobreak\hfil$\lrcorner$
\parfillskip=0pt\finalhyphendemerits=0\endgraf}\fi}
\DeclareMathOperator{\diam}{diam}
\DeclareMathOperator{\rad}{radius}
\DeclarePairedDelimiter{\ceil}{\lceil}{\rceil}
\def\C{\mathcal{C}} 
\newcommand{\NN}{\mathbb{N}} 
\def\R{\mathcal{R}} 
\newcommand{\csqrt}[1]{\lceil \sqrt{#1}\rceil} 
\let\le\leqslant
\let\ge\geqslant
\let\leq\leqslant
\let\geq\geqslant
\title{Improved Pyrotechnics:\\ Closer to the Burning Number Conjecture}
\author[1]{Paul Bastide}
\author[2]{Marthe Bonamy}
\author[3]{Anthony Bonato}
\author[4]{Pierre Charbit}
\author[5]{Shahin Kamali}
\author[6]{Théo Pierron}
\author[4]{Mikaël Rabie}
\affil[1]{\'Ecole Normale Sup\'erieure  Rennes, Rennes, France}
\affil[2]{CNRS, LaBRI, Universit\'e de Bordeaux, Bordeaux, France}
\affil[3]{Ryerson University, Toronto, Canada}
\affil[4]{IRIF, Paris, Paris, France}
\affil[5]{University of Manitoba, Winnipeg, Canada}
\affil[6]{LIRIS, Universit\'e de Lyon, Lyon, France}
\date{\today}
\begin{document}

\def\fk{4k^2+5k-2}

\sloppy
\maketitle

\begin{abstract}
The Burning Number Conjecture claims that for every connected graph $G$ of order $n,$ its burning number satisfies $b(G) \le \lceil \sqrt{n} \rceil.$ While the conjecture remains open, we prove that it is asymptotically true when the order of the graph is much larger than its \emph{growth}, which is the maximal distance of a vertex to a well-chosen path in the graph. We prove that the conjecture for graphs of bounded growth reduces to a finite number of cases. We provide the best-known bound on the burning number of a connected graph $G$ of order $n,$ given by $b(G) \le \sqrt{4n/3} + 1,$ improving on the previously known $\sqrt{3n/2}+O(1)$ bound. Using the improved upper bound, we show that the conjecture almost holds for all graphs with minimum degree at least $3$ and holds for all large enough graphs with minimum degree at least $4$. The previous best-known result was for graphs with minimum degree $23$.

\end{abstract}

\section{Introduction}\label{sec:intro}

Graph burning is a simplified model for the spread of influence in a network. Associated with the process is a parameter introduced in \cite{BJR0,bonato_intro,thez}, the burning number, which quantifies the speed at which the influence spreads to every vertex. Graph burning is defined as follows. Given a graph $G$, the burning process on $G$ is a discrete-time process. 
At the beginning of the first round, all vertices are unburned. In each round, first all unburned vertices that have a burned neighbour become burned, and then one new unburned vertex is chosen to burn, if such a vertex is available. If at the end of round $k$ every vertex of $G$ is burned, then $G$ is $k$-\emph{burnable}. The \emph{burning number} of $G,$ written $b(G),$ is defined to be the least $k$ such that $G$ is $k$-burnable.

As a graph with $n$ isolated vertices is not $(n-1)$-burnable, we instead focus on connected graphs. Paths are an interesting special case. For a path $P_n$ on $n$ vertices, it was shown in \cite{bonato_intro} that $b(P_n)=\lceil \sqrt{n} \rceil$. In \cite{bonato_intro}, it was conjectured that paths have the largest burning number among connected graphs.

\medskip

\noindent \textbf{Burning Number Conjecture} (or \textbf{BNC}):\index{Burning Number Conjecture} For a connected graph $G$ of order $n$, $$b(G)\leq \lceil \sqrt{n}\rceil.$$

\smallskip

For any connected graph $G$ and any spanning tree $T$ of $G$, we have $b(G) \leq b(T)$. Therefore it is sufficient to prove that the conjecture holds for trees.

The BNC has resisted attempts at its resolution, although various upper bounds on the burning number are known. In \cite{bessy2}, it was proved that for every connected graph $G$ of order $n$ that
$$b(G)\leq \sqrt{{\frac{12}7 n}} +3.$$ These bounds were improved in \cite{land_approx}, who proved, up until this paper, the best-known upper bound:
$$b(G)\le \bigg \lceil
\frac{\sqrt{24n+33}-3}{4} \bigg \rceil=\sqrt{\frac32n} +O(1).$$

In the present paper, we improve this by showing that any tree of order $n$ can burn in $\sqrt{\frac43 n}+1$ rounds. 

\begin{theorem}
    \label{th:approx}
    For a connected graph $G$ of order $n,$ we have that
    $$
        b(G) \leq \left\lceil \sqrt{{\frac43 n}} \right\rceil + 1.
    $$
\end{theorem}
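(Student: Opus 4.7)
Since $b(G) \le b(T)$ for any spanning tree $T$ of $G$, I reduce to the case where $G = T$ is a tree on $n$ vertices, and set $k = \lceil\sqrt{4n/3}\rceil$; the aim is a $(k+1)$-burning sequence. My first move is to split on the diameter $D$ of $T$. If $D \le 2k$, then the center of any diametral path has eccentricity at most $\lceil D/2\rceil \le k$, so setting $v_1$ to be this center already covers $T$ by step $k+1$ regardless of the other choices, and we are done.

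Assume now $D > 2k$ and fix a longest path $P = p_0p_1\cdots p_D$ of $T$. For every $v\in V(T)$, let $\pi(v)$ be its unique closest vertex on $P$ (identified with an integer index) and $d(v) = d_T(v,\pi(v))$. Maximality of $P$ gives the linear decay $d(v) \le \min(\pi(v),D-\pi(v))$. A burn at $p_a$ of radius $r$ covers exactly the vertices whose $(\pi,d)$-coordinates lie in the triangle $T_{a,r} = \{(i,d) : |i-a|+d\le r\}$ in the $(\pi,d)$-plane. My plan is to place $k+1$ burns $v_j = p_{a_j}$ with radii $r_j = k+1-j$ along $P$: the total base length $\sum_j(2r_j+1) = (k+1)^2 \ge 4n/3$ exceeds the path length $D+1 \le n$ by a slack of at least $n/3 - O(\sqrt{n})$. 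This slack is to be spent as overlap among consecutive burn intervals so as to also cover the $n - D - 1$ off-path vertices.

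The main obstacle is converting the aggregate inequality $\sum_i h_i \le n - D - 1$, where $h_i$ is the depth of the subtree rooted at $p_i$, into the pointwise constraint $h_i \le M(i) := \max_j(r_j - |i-a_j|)_+$ at every base position $i$. I would handle this via a left-to-right greedy placement that scans the burns in order of decreasing radius and shifts each $a_j$ inward by just enough to accommodate the heaviest pending off-path subtree within reach. Correctness should follow from the fact that both the demand bound $h_i \le \min(i,D-i)$ and the supply $r_j = k+1-j$ decay linearly at the extremities of the path, keeping the amortization balanced.
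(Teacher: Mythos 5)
There is a genuine gap, and in fact a structural flaw, in the proposal.

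The proposal restricts all $k+1$ burn centers to lie on a single longest path $P$ chosen once at the outset, and then argues about covering the off-path vertices by the depth profile $h_i$. But this restriction is fatal. Take $T$ to be a spider with three legs of equal length $\ell$, so $n = 3\ell+1$ and any diametral path $P$ has length $D = 2\ell$; the third leg hangs off the midpoint $p_\ell$ with depth $h_\ell = \ell = D/2$. To cover it with a burn on $P$ you need some $a_j,r_j$ with $r_j - |a_j-\ell| \ge \ell$, forcing $r_j \ge \ell \approx n/3$, while all your radii are $O(\sqrt{n})$. So under your on-path restriction the tree is simply not coverable, even though spiders are known to satisfy the conjecture. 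Re-choosing $P$ does not help: every diametral path in this spider has a length-$\ell$ branch hanging off its middle. The coordinate bound $h_i \le \min(i,D-i)$ that you invoke admits $h_{D/2} = D/2$, which already exceeds any admissible radius whenever $D > 2k$ -- exactly the regime you are in.

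Independently of this, the step you yourself flag as ``the main obstacle'' -- converting the aggregate inequality $\sum_i h_i \le n-D-1$ into the pointwise feasibility $h_i \le \max_j (r_j - |i-a_j|)_+$ by a greedy sweep -- is the entire mathematical content of the theorem and is left as a plausibility claim (``correctness should follow from the fact that both decay linearly''). A total-budget comparison does not by itself yield a feasible assignment; one needs either a Hall-type argument or a different decomposition. The paper avoids both problems with an inductive lemma (Lemma~\ref{lem:approx_ind}): at each step it either finds a connected subtree of diameter at most $b_i$ and size at least $b_i + \lfloor k/2\rfloor + 1$ whose removal leaves the rest connected, or concludes the whole tree has diameter at most $b_k$. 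Crucially the longest path is recomputed after every removal, so the ``branch off the middle'' pathology is absorbed, and the $\tfrac34 p^2$ count comes out of summing the per-step guarantee rather than from a one-shot placement. You would need to rebuild your argument along similar recursive lines (or at minimum drop the on-path restriction and supply a genuine feasibility proof) for the proposal to go through.
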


While the BNC is open for general graphs, it is known to hold for a number of graph classes. For example, in \cite{bonato_spider}, it is proven that the conjecture holds for spiders, which are defined as trees with at most one vertex of degree at most $3$. In \cite{kamali}, it was proven that any graph with minimum degree $\delta \ge 23$ satisfy the conjecture. 
Using Theorem \ref{th:approx}, we prove that the conjecture almost holds for all connected graphs of minimum degree at least $3$, and also holds for those of minimum degree at least $4$ that are of a large enough order.

\begin{theorem}\label{th:degree3}
For any connected graph $G$ on $n$ vertices, if every vertex in $G$ has degree at least $3$, then
$
    b(G) \leq \left\lceil \sqrt{n} \right\rceil+2.
$
\end{theorem}

\begin{theorem}\label{th:degree4}
For any connected graph $G$ on $n$ vertices, if every vertex in $G$ has degree at least $4$ and $n$ is sufficiently large, then
$
    b(G) \leq \left\lceil \sqrt{n} \right\rceil.
$
\end{theorem}

Our second main result concerns trees that have bounded growth. The \emph{growth} of a connected graph $G$ is the smallest integer $k$ such that all vertices in $G$ are within distance $k$ of some path $P$ in $G$. Trees of growth $1$ are known also as the class of \emph{caterpillars} and by extension trees of growth $k$ are often referred to as \emph{$k$-caterpillars}. As proven in Hiller, Triesch, and Koster~\cite{hiller_cat} and independently in \cite{liu1}, caterpillars satisfy the conjecture, and the conjecture has also been confirmed for $2$-caterpillars (or \emph{lobsters}) in \cite{hiller_cat}.  We refer the reader to a survey on graph burning for further details~\cite{bonato_survey}. Note that any result on trees of bounded growth immediately applies to graphs of bounded growth, by considering an appropriate spanning tree. 

In this paper, we present two theorems about trees of bounded growth.
The first one states that in order to prove the BNC for $k$-caterpillars, it suffices to verify it for a finite number of them. It would be interesting to find ways to go further and transform this into a practical way for the BNC to be verified for $k$-caterpillars (for say $k=10$). The statement currently suggests unreasonably many cases, which seems unnecessary.
\begin{theorem}\label{th:reduced}
For any $k \in \mathbb{N}$, if the BNC holds for all $k$-caterpillars on at most $(\fk)^2$ vertices, then it holds for all $k$-caterpillars.
\end{theorem}

The second gives an approximation of the BNC for all $k$-caterpillars.

\begin{theorem}\label{th:approxcat}
For any $k \in \mathbb{N}$ and every tree $T$ of growth at most $k$ on $n$ vertices
$$b(T)\leq  \sqrt{n+(\fk)^2}.$$
\end{theorem}

In fact both will be direct consequences of the more general statement below (by taking $c=0$ or $c=(\fk)^2$).

\begin{theorem}\label{th:maincat}
Let $k$ and $c$ be two integers. If $b(T)\leq \sqrt{|V(T)|+c}$ holds for every $k$-caterpillar $T$ of order at most $(\fk)^2-c$, then it holds for all $k$-caterpillars. \end{theorem}

Before closing this introduction, we present a reformulation of the conjecture that we will use in the rest of the paper, and allows us to propose a stronger conjecture. If we denote by $v_i$ the vertex chosen to burn at the end of round $i$ in a burning process, then the set of vertices that are burned after $p$ rounds is simply $\cup_{i=1,\ldots,p} B(v_i,p-i)$ where $B(v,r)$ is the set of the vertices at distance at most $r$ from $v$. Therefore, the statement of the BNC can be reformulated as the fact that the vertex set of any graph on $n$ vertices can be covered by balls of radius $0,1,\ldots,\csqrt n -1$. One can therefore wonder now which sets of radii are enough to cover any $n$ vertex graph.

A graph $G$ is \emph{$\R$-burnable} for some finite set of $\R = \{r_1,\ldots,r_p\} \subseteq \NN$ if there exist vertices $v_1,\ldots,v_p$ of $G$ such that $\bigcup_{i \in \{1,\ldots,p\}} B(v_i,r_i)  = V(G)$. Note that multisets are not eligible, which is crucial in later arguments. As referenced earlier, the BNC states that every graph on $n$ vertices is $\{0,1,\ldots,\csqrt n -1\}$-burnable. We propose the stronger following conjecture.
\begin{conjecture}\label{bnc:strong}
Let $T$ be a tree of growth $k$ and of order $n$. If $\R$ is a finite set of integers such that $\{0,1,\ldots,k\} \subseteq \R$ and $\sum_{r \in \R} (2r+1)\geq n$, then $T$ is $\R$-burnable.
\end{conjecture}

Observe that $$\sum_{r=0}^{ \csqrt n -1} (2r+1)=\csqrt n^2\geq n.$$

The article is organized as follows. In Section \ref{sec:gen}, we prove Theorem \ref{th:approx} and its two corollaries Theorem \ref{th:degree3} and Theorem \ref{th:degree4}. In Section \ref{sec:cat}, we prove Theorem \ref{th:maincat}. The concluding section describes further directions.

All graphs we consider are simple, finite, and undirected. The distance between vertices $u$ and $v$ is denoted by $d(u,v).$ The diameter of a graph $G$ is denoted by $\diam (G)$. For a vertex $v$ and an integer $r$, we denote by $B(v,r)$ the ball of radius $r$ centered on $v$; that is, the set of vertices at distance at most $r$ from $v$. We will use the notation $\rad(G)$ for the radius of a graph, that is the smallest integer $r$ such that there exists a vertex $x$ of $G$ with $V(G)=B(x,r)$. For background on graph theory, see \cite{west}.

\section{General bounds}\label{sec:gen}

\subsection{Improved upper bound on the burning number}

The goal of this section is to prove Theorem~\ref{th:approx}. The proof relies heavily on the following lemma. 

\begin{lemma}
    \label{lem:approx_ind}
    For every tree $T$ and nonempty and finite $\R\subseteq \NN$, there exists $r\in \R$ such that either $\rad(T)\leq r$ or there exists a subtree $T'$ of $T$ such that 
    \begin{enumerate}
    \item $T\setminus T'$ is connected,
    \item $\rad(T')\leq r$, and
    \item $|V(T')|\geq r+|\R|/2$.
    \end{enumerate}
\end{lemma}

We first show how to quickly derive Theorem~\ref{th:approx} assuming that the lemma holds. 

\begin{proof}[Proof of Theorem~\ref{th:approx}]
Given a tree $T$ on $n$ vertices and an integer $p$, we repeatedly apply Lemma~\ref{lem:approx_ind} to obtain that $T$ is $[0,p]$-burnable unless $$n > \sum_{i=0}^{p}i+\sum_{i=0}^pi/2=\frac{3p(p+1)}{4}.$$ We therefore have that $p \leq \left\lceil \sqrt{{\frac43 n}} \right\rceil + 1,$ as desired.
\end{proof}

\begin{proof}[Proof of Lemma~\ref{lem:approx_ind}] Let $T$ be a tree and $\R\subseteq \NN$ satisfying its hypotheses. We consider a maximum length path $P=(t_0,t_1,\ldots,t_\ell)$ of $T$, and root $T$ in $t_\ell$. For $0\leq i \leq \ell$, we define $T_i$ to be the subtree of $T$ rooted in $t_i$, containing all descendants of $t_i$ including itself. 
For $0\leq i \leq \ell$, we also define:

    $$\phi(i) = \max\{j\colon T_j\subseteq B(t_i,i)\}$$
    
It is straightforward to notice that  $i\leq \phi(i)\leq 2i$. Further, if $\phi(r)=\ell$ for some $r$, then $T=T_{\ell}\subseteq B(t_r,r)$ so we have the first possible outcome of the lemma. We therefore assume from now on that $\phi(r)<\ell$ for every $r\in \R$
. For any $r\in \R$, note that the tree $T'=T_{\phi(r)}$ satisfies the two first conditions required by the second outcome of the lemma, so we may also assume that for any $r\in \R$, $T_{\phi(r)}$ contains strictly less than $r+|\R|/2$ vertices. In particular, since vertices $t_0,t_1,\ldots, t_{\phi(r)}$ belong to $T_{\phi(r)}$, we have that $\phi(r)< r+|\R|/2-1$ for every $r\in \R$.

Let $\R'$ be the set of the $N=\lceil |\R|/2 \rceil$ largest elements of $\R$ and $r\in\R'$. A straightforward but crucial consequence of the fact that $\R$ is not a multiset is that $r\geq |\R|-N \geq |\R|/2-1 $ and hence $\phi(r)<2r$. Further, by the definition of $\phi(r)$ and since $\phi(r)<\ell$, there exists a vertex $x_r\in T_{\phi(r)+1}\setminus T_{\phi(r)}$ such that $d(t_r,x_r)=r+1$. The inequality $\phi(r)<2r$ implies that $x_r$ lies outside $P$ and we observe that because of the distance condition, $x_r\neq x_r'$ for distinct $r$ and $r'$ in $\R'$. Note that $\phi(r_i)>\phi(r_j)$ implies that $x_{r_j}\in T_{\phi(r_i)}$.

Denote by $r_1<r_2<\cdots<r_N$ the elements of $\R'$, and let $m$ the smallest index such that $\phi(r_m)\geq \phi(r_N)$. By the preceding discussion, $T_{\phi(r_m)}$ contains all vertices $x_{r_i}$ for every $i<m$ and contains also at least $r_N+1$ vertices from $P$. Hence, it has order at least $$m+r_N \geq m+r_m+(N-m)=r_m + N \geq r_m + |\R|/2$$ since the $r_i$ are distinct integers, which yields the result.
\end{proof}

\subsection{Graphs of minimum degree $3$ and $4$}

Using Theorem~\ref{th:approx}, we are now ready to argue that the BNC almost holds for graphs of minimum degree at least $3$, and obtain Theorems~\ref{th:degree3} and~\ref{th:degree4}. We use the following two convenient theorems.

\begin{theorem}\cite{kleitman1991spanning}\label{th:3leaves}
Every connected graph on $n$ vertices each of degree at least $3$ admits a spanning tree with at least $\frac{n}4+1$ leaves.
\end{theorem}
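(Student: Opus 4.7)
The plan is to adapt the classical exchange argument of Kleitman and West. I would fix a spanning tree $T$ of $G$ maximizing the number of leaves; let $L$ be its leaves and $I = V(G) \setminus L$ its internal vertices, so $|L| + |I| = n$, and the target inequality rewrites as $|I| \le 3|L| - 4$.

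Next I would partition $I$ into \emph{leafy} internal vertices (those having a leaf neighbor in $T$) and \emph{barren} internal vertices $B$ (those having none). The leaf-maximality of $T$ enforces the following structural constraints, which I would verify by explicit pivot arguments: (i) every barren vertex has $T$-degree exactly $2$, since at a barren vertex of higher degree one could detach a subtree and re-attach it via a non-tree edge to strictly increase the leaf count; (ii) the subforest $T[B]$ is consequently a disjoint union of paths, each of whose endpoints is $T$-adjacent to a leafy vertex; (iii) every non-tree edge incident to a barren vertex forbids a specific pivot, which strongly restricts where such chords may land along barren paths.

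The main step is a discharging argument that leverages $\delta(G) \ge 3$: each barren vertex has at least one neighbor outside its two $T$-neighbors, providing a chord of $G \setminus T$. I would charge each barren vertex to a leafy vertex at the nearer end of its barren path, then use the chord restrictions from (iii) to show that each leafy vertex absorbs at most a constant number of barren charges. Coupled with the observation that each leafy vertex already ``pays'' for itself via one of its leaf neighbors, this yields $|I| \le 3|L|$; the additive $-4$ needed to reach $\ell \ge n/4 + 1$ comes from a small boundary gain, e.g.\ by analysing a longest barren path or by examining a well-chosen root, which provides surplus leaves that are not double-counted.

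The hardest part will be the bookkeeping when barren paths are long or when multiple chords land on the same barren segment: these are precisely the configurations where a naive charging scheme breaks, and one must argue globally that such accumulations cannot occur simultaneously at many leafy vertices. The additive improvement from $n/4$ to $n/4 + 1$ is similarly delicate and typically requires a dedicated treatment of a minimal counterexample (or of the base case of a short graph), which is why the clean statement is pinned at exactly $\tfrac{n}{4} + 1$ rather than $\tfrac{n}{4}$.
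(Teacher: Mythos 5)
The paper does not prove this statement at all: it is imported verbatim as a citation to Kleitman and West (1991), and no argument for it appears anywhere in the text. So there is no ``paper's own proof'' to compare your proposal against --- the right expectation is that you would simply cite the result.

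That said, a few remarks on your sketch as a standalone proof attempt. It is not the argument of the cited source: Kleitman and West do not optimize over leaf-maximal spanning trees and perform local exchanges; they grow a spanning tree greedily from a root, expanding at a leaf with external neighbours at each step, and track a potential of the form $3\ell - n'$ (current leaves minus current vertices) through a case analysis of expansion types, closing with an amortized bound. Your proposal --- take a leaf-maximal tree, split internal vertices into leafy and barren, and discharge --- is closer in spirit to the older Linial/Storer line of argument. As written it has a concrete gap at step (i): if $v$ is barren with $T$-degree at least $3$, deleting an incident tree edge $vu$ leaves $v$ with degree at least $2$, so $v$ does not become a leaf, and reattaching the detached subtree along a chord $xy$ generically \emph{destroys} a leaf at $x$ (and possibly at $y$) without a guaranteed compensating gain; the claimed pivot is not automatically leaf-increasing, and establishing that barren vertices all have $T$-degree $2$ requires a more careful choice of which edge to delete and which chord to insert (or a further extremal condition on $T$ beyond leaf-maximality). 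The later discharging step also needs a quantitative bound on barren-path lengths, which you flag but do not supply, and the additive constant is a separate delicacy. None of this contradicts the theorem, but the sketch would need substantial repair before it proves the stated bound; for the purposes of this paper, the citation is what is intended.
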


\begin{theorem}\cite{griggs1992spanning}\label{th:4leaves}
Every connected graph on $n$ vertices each of degree at least $4$ admits a spanning tree with at least $\frac{2n+8}5$ leaves.
\end{theorem}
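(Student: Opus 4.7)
The plan is to take a spanning tree $T$ of $G$ with the maximum possible number of leaves and to show, via an exchange argument exploiting minimum degree~$4$, that this number is already at least $\tfrac{2n+8}{5}$.

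First I would set up the standard notation: write $\ell$ for the number of leaves of $T$, $B$ for the set of internal vertices of tree-degree at least $3$, and $C$ for the set of chain vertices of tree-degree exactly $2$. The handshake identity for $T$ reads
\[
\ell+2|C|+\sum_{v\in B}\deg_T(v)=2(n-1),
\]
and combining it with $\sum_{v\in B}\deg_T(v)\geq 3|B|$ and $n=\ell+|B|+|C|$ already yields the cheap bound $\ell\geq |B|+2$. Rewriting the target $\ell\geq \tfrac{2n+8}{5}$ as $3\ell\geq 2|B|+2|C|+8$, the real task is to obtain an extra constraint bounding $|C|$ (morally of the form $|C|\leq \tfrac{\ell}{2}-O(1)$) to feed into this inequality.

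The technical heart is the exchange step. Each leaf of $T$ has at least three non-tree neighbours in $G$, and each chain vertex at least two; maximality of $\ell$ forbids various local configurations of these non-tree edges. I would enumerate forbidden patterns such as: a non-tree edge joining two leaves (which allows an immediate swap producing more leaves), a leaf whose non-tree edges reach ``deep'' into a long chain (which allows rerouting that chain vertex into a leaf), and a chain vertex whose non-tree edges are entirely absorbed by nearby leaves or chain vertices. Negating each configuration gives a local combinatorial constraint, and then charging each chain vertex either to a nearby branch vertex or to a bounded set of leaves via these constraints produces the desired linear bound on $|C|$, which plugs back into $3\ell\geq 2|B|+2|C|+8$.

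The main obstacle I expect is tightening the charging scheme enough to produce the precise constant $\tfrac{2}{5}$ together with the additive $+\tfrac{8}{5}$. Exchange arguments of this style generically give $\ell\geq c n-O(1)$ for some fraction $c$ that degrades unless one carefully tracks, for each rewiring, the collateral effect on neighbouring chain and branch vertices (in particular to verify that a swap does not accidentally convert a branch into a chain and undo earlier accounting). The additive constant is dictated by small extremal examples, essentially long subdivisions of a $4$-regular graph on few vertices; I would finish by checking those base cases directly, both to pin down the constant $+8$ and to confirm tightness of the bound.
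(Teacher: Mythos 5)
The paper does not prove this statement: it is quoted verbatim from Griggs and Wu~\cite{griggs1992spanning} and invoked as a black box in the proof of Theorem~\ref{th:degree4}. So there is no in-paper argument to compare against; what follows assesses your sketch on its own terms.

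Your reduction is the right kind of bookkeeping: the handshake identity together with $\deg_T(v)\geq 3$ for $v\in B$ gives $\ell\geq |B|+2$ in any tree, and the target $\ell\geq\frac{2n+8}{5}$ rewrites as $3\ell\geq 2|B|+2|C|+8$, which would indeed follow from something like $|C|\leq \frac{\ell}{2}-2$. But that last inequality \emph{is} the theorem, and you never establish it. "Enumerate forbidden patterns and charge each chain vertex to a nearby branch vertex or a bounded set of leaves" is a plan, not a proof: no explicit list of forbidden configurations is given, no verification that maximality really excludes them, and no accounting that the charges are disjoint. The hardest case --- a long chain of degree-$2$ vertices whose non-tree edges all land on other chain vertices or on the two leaves at its ends, far from any vertex of $B$ --- is exactly where such schemes tend to break, and it is not addressed. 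Note also that the published proofs in this line (Kleitman--West for minimum degree $3$, Griggs--Wu for minimum degree $4$ and $5$) do not start from a maximum-leaf spanning tree at all; they grow a tree greedily by repeated ``expansion'' steps and amortize leaves gained against vertices absorbed. Your exchange-based route is not obviously hopeless, but it is a different road from the one in the literature, and the part of it that would actually yield the constant $\frac{2}{5}$ is missing.
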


\begin{proof}[Proof of Theorem~\ref{th:degree3}]
Let $G$ be a connected graph on $n$ vertices each of degree at least $3$. By Theorem~\ref{th:3leaves}, there is a spanning tree $T$ with at least $\frac{n}4+1$ leaves. Let $T'$ be the tree obtained from $T$ by deleting all leaves. Note that $b(T)\leq b(T')+1$. By Theorem~\ref{th:approx}, we have that $$b(T')\leq \ceil*{\sqrt{\frac43\cdot\left(\frac34n-1\right)}}+1.$$ We derive that $b(G)\leq b(T)\leq \lceil \sqrt{n} \rceil +2$.
\end{proof}

\begin{proof}[Proof of Theorem~\ref{th:degree4}]
Let $G$ be a connected graph on $n$ vertices each of degree at least $4$. By Theorem~\ref{th:4leaves}, there is a spanning tree $T$ with at least $\frac{2n+8}5$ leaves. Let $T'$ be the tree obtained from $T$ by deleting all leaves. Note that $b(T)\leq b(T')+1$. By Theorem~\ref{th:approx}, we have that $$b(T')\leq \ceil*{\sqrt{\frac43\cdot\left(\frac{3n-8}5\right)}}+1.$$ We derive that $b(G)\leq b(T)\leq \lceil \sqrt{n} \rceil$ when $n$ is large enough (for example, if $n\ge 325$).
\end{proof}

\section{Caterpillars Result}\label{sec:cat}
This section is devoted to the proof of Theorem \ref{th:reduced}. We start with another inductive result similar to Lemma \ref{lem:approx_ind}, that will be of use in the final step of the proof.

\subsection{Inductive Lemmas}

\begin{lemma}
    \label{lem:approx_cat}
    For every $k$-caterpillar $T$ and any integer $r$, either $\rad(T)\leq r$ or there exists a subtree $T'$ of $T$ such that 
    \begin{enumerate}
    \item $T\setminus T'$ is connected,
    \item $\rad(T')\leq r$, and
    \item $|V(T')|\geq 2r+1-k$.
    \end{enumerate}
\end{lemma}

\begin{proof}
The proof is based on an analogous argument as the one of Lemma \ref{lem:approx_ind}: we consider a maximum length path $P=(t_0,t_1,\ldots,t_\ell)$ of $T$, root the tree $T$ in $t_\ell$, and define $\phi(r)$ to be the maximum $i$ such that the subtree rooted in $t_i$ is contained in $B(t_r,r)$. Either $\phi(r)=2r+1$, or there exists a vertex $v\not\in P$ such that 
\begin{enumerate}
\item The closest vertex from $v$ on $P$ is $t_{\phi(r)+1}$, and
\item $d(t_{\phi(r)+1},v)=2r-\phi(r)$.
\end{enumerate}
Since $T$ is a $k$-caterpillar and $P$ a maximum path, every vertex is at distance at most $k$ from $P$, and so $2r-\phi(r)\leq k$. The latter inequality implies the desired result since the tree rooted at $t_{\phi(r)}$ contains at least the vertices $t_0,t_1,\ldots,t_{\phi(r)}$.
\end{proof}

By applying repeatedly the previous lemma radius by radius we obtain the following result for $k$-caterpillars.

\begin{corollary}\label{coro:easycat}
Let $T$ be a tree of growth $k$ and of order $n$. If $\R$ is a set of integers such that $\sum_{r \in \R} (2r+1-k)\geq n$, then $T$ is $\R$-burnable.
\end{corollary}

Before starting the proof of the theorem, we state a final inductive lemma.

\begin{lemma}\label{lem:magicalSlemma}
Let $\R$ be a finite set of integers and denote by $R$ its maximum element. Let $T$ be a tree and $P$ a path in $T$ with two endpoints $u$ and $v$. Denote by $T_u$ (respectively, $T_v$) the connected component of $T \setminus (P \setminus \{u,v\})$ containing $u$ (respectively, $v$), and define $k=\max_{x \not\in V(T_u\cup T_v)}d(P,x)$.  If $T_u \cup T_v + uv$ is $(\R \setminus \{R\})$-burnable and $d(u,v)\leq 2R - 2 k+2$, then $T$ is $\R$-burnable.
\end{lemma}

\begin{proof}
Define $\R'=\R\setminus \{R\}$, $T' = (T_u \cup T_v + uv)$ and $T_P = T \setminus (T_u \cup T_v)$. By hypothesis, $T'$ is $\R'$-burnable so let us consider a set of vertices $(x_r)_{r\in \R'}$ such that the balls $B_{T'}(x_r,r)_{r \in \R'}$ cover $T'$. We want to keep in $T$ the same balls centered in the same vertices and add a ball of radius $R$ that covers every vertex not covered by those balls. Since $T$ is a tree, it is sufficient to show that no two non-covered vertices are at distance more than $2R$ (centering the ball at the middle of a longest path in $T$ between non-covered vertices covers all of them). 

Define $W=V(T)\setminus  \cup_{r\in \R'} B_T(x_r,r)$ and assume for a contradiction that there exists $(x,y)\in W^2$ such that $d_T(x,y)>2R$. 
We first define $\R_u = \{r\in \R' \colon x_r \in T_u\}$, similarly $\R_v=\R'\setminus \R_u$, and:
$$r_v=\max_{r\in \R_v}(r-d_T(x_r,v))\quad\text{ and }\quad r_u=\max_{r\in \R_u}(r-d_T(x_r,u)).$$
It is evident that $\max(r_u,r_v)<R$ and observe also that any vertex $w$ in $W$ satisfies $d(w,u)>r_u$ and $d(w,v)>r_v$. Therefore, if $z\in T_u \cap W$, we derive that $r_u<d(z,u)<r_v$. Hence, $r_u<r_v$ and so $T_u\cap W$ or $T_v\cap W$ is empty. 

Without loss of generality, assume that $r_u\leq r_v$, so that $T_v\cap W=\varnothing$. In particular, neither $x$ nor $y$ can belong to $T_v$. Moreover, for any $w\in W\cap T_P$, observe that $$d(w,u)+r_v<d(w,u)+d(w,v) = d(u,v)+2d(w,P) \leq 2R +2.$$

We are now able to derive a contradiction in every possible case:
\begin{enumerate}
\item If $(x,y)\in T_P^2 $, then $d(x,y)\leq d(x,P)+d(y,P)+d(u,v)-2\leq 2R$.
\item If $(x,y)\in T_u^2$, then $d(x,y)\leq d(x,u)+d(y,u) \leq 2r_v < 2R$.
\item If $(x,y)\in T_u \times T_P $, then  $d(x,y)=d(x,u)+d(y,u)< r_v+d(y,u) < 2R+2$.
\end{enumerate}
The proof follows.
\end{proof}

\subsection{Proof of Theorem \ref{th:maincat}}

Let $k$ and $c$ be integers, and define $f(k)=\fk$. If Theorem~\ref{th:maincat} is false, then there exists a $k$-caterpillar $T$ such that $b(T)>\lceil \sqrt{|V(T)|+c} \rceil > f(k)$, so let us assume now that $T$ is a minimum order such $k$-caterpillar.

%
%
%
%

To simplify notation, we will in the following denote $n=|V(T)|$, $R=\lceil \sqrt{n+c} \rceil-1$  and $\R$ the set of integers $\{0,1,\ldots,R\}$, so that $T$ is not $\R$-burnable and $R\geq f(k)$.\\

Here is a simple claim on $T$ that we will use often.

\begin{claim}\label{claim:min}
There is no integer $R'<R$ and subtree $T'$ of $T$ such that $T\setminus T'$ is covered by balls with radii between $R'$ and $R$ such that $|V(T)\setminus V(T')|\geq \sum_{r=R'}^{R}(2r+1)$.
\end{claim}
\begin{proof}
Since $|V(T)|+c\leq \sum_{r=0}^{R}(2r+1)$, we have that $$|V(T')|+c\leq \sum_{r=0}^{R'-1}(2r+1).$$ Therefore, $T'$ would be a smaller order counterexample contradicting the minimality of $T$.
\end{proof}

We now consider a \emph{spine} $S$ of $T$; that is, a path of maximum length, so that every vertex not in the path is at distance at most $k$ from the path. We arbitrarily choose $s_0$ to be one endpoint of $S$, and for two vertices $s'$ and $s''$ of $S$, we write $s'<s''$ if $d(s_0,s')<d(s_0,s'').$ In particular, $s'$ and $s''$ are distinct.

Let us denote by $\R_{1}=\{r \in \R,\, (R+k)/2<r\leq R\}$. We now define the main object in our approach. A sequence of pairs $(c_i,r_i)\in V(S)\times \R_1$ for $1\leq i \leq t$ is \emph{admissible} if
\begin{enumerate}
\item For all $i<j$, $r_i \neq r_j$,
\item For all $i$, $c_i<c_{i+1}$ and  $d(c_i,c_{i+1})=r_i+r_{i+1}+1$,
\item For all $i$, $V_i=V\setminus (\cup_{j\leq i} B(c_j,r_j))$ induces a connected subtree $T_i$ of $T$, and
\item For all $i$, $S\cap V_i$ is a spine of $T_i$.
\end{enumerate}

Phrased differently, we pack balls along the spine without disconnecting the tree nor changing the spine.

\begin{claim}\label{claim:final} Let $\C$ be an admissible sequence $(c_i,r_i)$ for  $1\leq i \leq t$ such that the sequence of radii $(r_1,r_2,\ldots,r_t)$ is lexicographically maximal. We then have that
\begin{enumerate}
\item $t> |\R_1| -2k$, and
\item $|\!\cup_{i=1}^{t}  B(c_i,r_i)| \geq \sum_{i=1}^{t} (2r_i+1) + 2k^2$.
\end{enumerate}
\end{claim}
Note that the maximality condition implies that this maximal sequence may obtained by a process where at each step, we chose the largest remaining radii that keeps the sequence admissible.\\

Let us prove first that this claim implies our result.  Let $T'$ the subtree induced by $V\setminus \cup_{i=1}^{t}B(c_i,r_i)$. Since $\R'=\R_1\setminus \{r_i \in \R_1, \ 1\leq i \leq t\}$ contains less than $2k$ elements by the first item of the claim, we can apply $2k$ times Lemma \ref{lem:approx_cat}  to obtain a subtree $T''$ such that $T'\setminus T''$ is covered by balls with radius in $\R'$ and $|V(T')\setminus V(T'')|\geq \sum_{r\in \R'} (2r+1)-2k^2$. Overall, $T\setminus T''$ is coverable by balls of radius in $\R_1$ and $|V(T)\setminus V(T'')|\geq \sum_{r\in \R_1} (2r+1)$, which is impossible by Claim \ref{claim:min}.

\begin{proof}[Proof of Claim \ref{claim:final}] Consider $\C$ as in the statement and fix some integer $j\leq t$. Denote $V'=V\setminus \cup_{i\leq j} B(c_i,r_i)$. Note that $T'=T[V']$ is a $k$-caterpillar, and that by assumption $S'=S\cap V'$ is a spine of $T'$. Let $s'_0<s'_1<\ldots < s'_p$ the vertices of $S'$.  For a vertex $v$ not in the spine, we will say that $v$ is \emph{pending to} $s'_i$ if $s'_i$ is the closest vertex to $v$ on the spine. Let $\R'=\R_1\setminus \{r_i \in \R_1, \ 1\leq i \leq j\}$. Now by maximality of $\C$ we have that if $r\in \R'$, then:

\begin{enumerate}
\item Either $j<t$ and $r\leq r_{j+1}$, or
\item The removal of $B(s'_r,r)$ disconnects $T'$, or
\item The vertices $s'_{2r+1},s'_{2r+2},\ldots,s'_p$ are no longer a spine of $T'\setminus B(s_r,r)$.
\end{enumerate}

If $j=t$ or $r> r_{j+1}$, then this implies that there is a vertex $v_r \not\in S'$ such that:
\begin{enumerate}
\item Either $v_r$ is in the subtree pending to $s'_i$ for some $ 2r-k\leq i \leq 2r$ and $d(s'_0,v_r)=2r+1$, as $v_r$ is not covering it, or
\item The vertex $v_r$ is in the subtree pending to $s'_i$ for some $2r< i \leq 2r+k$ and $d(s'_{0},v_r)=2i-2r$, as the distance from $s'_i$ to that node must be 1 plus the distance to the last node covered by $s'_{2r+1}$.
\end{enumerate}

We want to prove that all such $v_r$ are distinct. Let us say that $v_r$ is \emph{type }$1$ of it corresponds to the first case above, and of \emph{type} $2$, otherwise. See Figure \ref{fig:types} for representation of the two possibilities. 

The distance to $s'_0$ condition implies that elements of type $1$ are pairwise distinct, and same for elements of type $2$. We conclude that the parity of this distance is odd in one case and even in the other, which also forbids an element of type $1$ to be equal to an element of type $2$.

\begin{figure}[htbp]
\begin{center}
\includegraphics[scale=1.5]{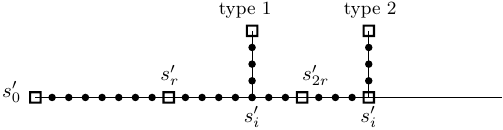}
\caption{Vertices of type 1 and 2.}\label{fig:types}
\end{center}
\end{figure}

If $j=t$, then this implies that we have at least $|\R'|$ vertices outside the spine such that each of them are pending to some $s'_i$ for $i$ between $\min\{2r-k, r\in \R'\}\geq 3k$ (remember all elements in $\R_1$ are at least $(R+k)/2\geq 2k$) and $\max\{2r+k, r\in \R'\}\leq 2R+k$. We now use Lemma \ref{lem:magicalSlemma} applied to $u=s'_{3k-1}$ and $v=s'_{2R+k+1}$. We have that $d(u,v)=2R-2k+2$ and since $T$ is not $\R$-burnable, the tree $T'$ denoted $T_u+T_v+uv$ in the statement of the lemma should not be $\R\setminus \{R\}$-burnable. By the minimality of $T$, this implies that $T\setminus T'$ contains at most $2R$ elements, which in turn implies that $\R'$ contains less than $2k$ elements, which is exactly the first item of the claim.

If $j<t-1$, then we denote by $\R''$ the set $\{r\in \R', r>r_{j+1}\}$. When $\R''$ is not empty, we denote by $r''$ the largest element of $\R''$. The discussion above implies that there are at least $|\R''|$ vertices outside the spine that are pending between $s'_{2r_{j+1}-k}$ and $s'_{2r''+1+k}$. By the definition of $\R_{1}=\{r \in \R,\, (R+k)/2<r\leq R\}$, we have that $$2r''+1+k\leq 2R+k+1< 2r_{j+1}+2r_{j+2}+1-k,$$ which implies that all those vertices outside the spine will be included into either $B(c_{j+1},r_{j+1})$ or $B(c_{j+2},r_{j+2})$. In fact, they cannot be included in $B(c_{j+1},r_{j+1})$ as for any $r \in R''$, $d(s'_0,v_{r}) = 2r+1 > 2r_{j+1}+1$, therefore they are only included in $B(c_{j+2},r_{j+2})$ and are not counted twice.

The total number of vertices outside the spine that will be included into the union of the balls in our sequence is at least the number of times balls in $\R_1$ are rejected for a smaller ball in the process. We claim that this number is at least $R-\min_{i\leq t}{r_i}$. 

We denote $m_j= \min_{i\leq j} r_i$  and observe first that at any time of the process, $\{r_i, 1\leq i\leq j\}$ is never equal to the interval $[m_j,R]$ by Claim \ref{claim:min}. Hence, there is at least one radius of size greater that $m_j$ that has been rejected. Now consider each step $j$ where we pick a new minimum, that is $m_j=r_j$. We have rejected at least $m_{j-1}-m_j$ radii (at least $1$ greater than $m_{j-1}$ and $m_{j-1}-m_j-1$ between $m_j$ and $m_{j-1}$). Overall, this gives that the number of total rejections is at least $R-m_t$ as claimed and this concludes the proof since $$R-m_t\geq t \geq  |\R_1|-2k +1 \geq (R-k)/2 - 2k +1 \geq 2k^2,$$ where the last inequality comes from the assumption $R\geq f(k)$. \end{proof}

\section{Conclusion and Further Directions}
We provided the best known upper bound for the burning number of a connected graphs, and have shown that the burning graph conjecture is asymptotically true for $k$-caterpillars. We can in fact prove that this extends to Conjecture~\ref{bnc:strong} with analogous arguments, though we do not go into details. In this case, we can also do a finer analysis to modify the bound in Theorem~\ref{th:maincat} from $O(k^4)$ to $O(k^2)$, though it did not seem to us enough to justify an additional few pages of proofs.

The two proofs of Theorem~\ref{th:approx} and Theorem~\ref{th:maincat} may seem similar in approach: in both cases, we consider a maximum order path and try to place large balls centered on the path in a greedy fashion. However, the induction makes a big difference: in the case of Theorem~\ref{th:maincat}, we pick a maximum path initially and all balls chosen will be centered on the path, whereas in the proof the Theorem~\ref{th:approx}, at each step we take a new maximum path before removing a ball centered on it, so that in the next step, the new maximum path might not be a subpath of the previous one. In this proof, the set of centers constructed may not lie on a common path. 
One might wonder if the BNC could be true with the additional requirement that all balls are centered along some path. This is not the case, as can be seen by considering the graph obtained from a star with p leaves by subdividing each edge p times.

Theorem \ref{th:reduced} states that, for a fixed value of $k$, there are a finite number of graphs that could be counterexample of the BNC. As it is now, the statement induces a large number of cases, it would be interesting to reduce this number enough to obtain a computer-based proof of the BNC for small values of $k$.


\bibliographystyle{alpha}
\bibliography{biblio.bib}

\newcommand{\etalchar}[1]{$^{#1}$}
\begin{thebibliography}{BBJ{\etalchar{+}}18}

\bibitem[BBJ{\etalchar{+}}18]{bessy2}
{St\'ephane} Bessy, Anthony Bonato, Jeannette Janssen, Dieter Rautenbach, and
  Elham Roshanbin.
\newblock Bounds on the burning number.
\newblock {\em Discrete Applied Mathematics}, 235:16--22, 2018.

\bibitem[BJR14]{BJR0}
Anthony Bonato, Jeannette Janssen, and Elham Roshanbin.
\newblock Burning a graph as a model of social contagion.
\newblock In {\em International Workshop on Algorithms and Models for the
  Web-Graph}, pages 13--22. Springer, 2014.

\bibitem[BJR16]{bonato_intro}
Anthony Bonato, Jeannette Janssen, and Elham Roshanbin.
\newblock How to burn a graph.
\newblock {\em Internet Mathematics}, 12(1-2):85--100, 2016.

\bibitem[BL19]{bonato_spider}
Anthony Bonato and Thomas Lidbetter.
\newblock Bounds on the burning numbers of spiders and path-forests.
\newblock {\em Theoretical Computer Science}, 794:12--19, 2019.

\bibitem[Bon21]{bonato_survey}
Anthony Bonato.
\newblock A survey of graph burning.
\newblock {\em Contributions to Discrete Mathematics}, 16:185--197, 2021.

\bibitem[GW92]{griggs1992spanning}
Jerrold~R.\ Griggs and Mingshen Wu.
\newblock Spanning trees in graphs of minimum degree 4 or 5.
\newblock {\em Discrete Mathematics}, 104:167--183, 1992.

\bibitem[HTK19]{hiller_cat}
Michaela Hiller, Eberhard Triesch, and Arie~MCA Koster.
\newblock On the burning number of $ p $-caterpillars.
\newblock {\em arXiv preprint arXiv:1912.10897}, 2019.

\bibitem[KMZ20]{kamali}
Shahin Kamali, Avery Miller, and Kenny Zhang.
\newblock Burning two worlds: algorithms for burning dense and tree-like
  graphs.
\newblock In {\em International Conference on Current Trends in Theory and
  Practice of Informatics}, pages 113--124. Springer, 2020.

\bibitem[KW91]{kleitman1991spanning}
Daniel~J Kleitman and Douglas~B West.
\newblock Spanning trees with many leaves.
\newblock {\em SIAM Journal on Discrete Mathematics}, 4(1):99--106, 1991.

\bibitem[LHH20]{liu1}
Huiqing Liu, Xuejiao Hu, and Xiolan Hu.
\newblock Burning number of caterpillars.
\newblock {\em Discrete Applied Mathematics}, 284:332--340, 2020.

\bibitem[LL16]{land_approx}
Max Land and Linyuan Lu.
\newblock An upper bound on the burning number of graphs.
\newblock In {\em Workshop on Algorithms and Models for the Web-Graph}, pages
  1--8. Springer, 2016.

\bibitem[Ros16]{thez}
Elham Roshanbin.
\newblock {\em Burning a graph as a model of social contagion}.
\newblock PhD thesis, 2016.

\bibitem[Wes01]{west}
Douglas~B.\ West.
\newblock {\em Introduction to Graph Theory, 2nd edition}.
\newblock Prentice Hall, 2001.

\end{thebibliography}

\end{document}